\documentclass[12pt]{amsart}
\usepackage{graphicx}
\usepackage{esint}
\usepackage{amssymb, mathrsfs, url, amsfonts, amsthm, amsmath}
\usepackage{enumerate}
\usepackage{xcolor}
\usepackage{etoolbox}
\apptocmd{\sloppy}{\hbadness 10000\relax}{}{}
\apptocmd{\sloppy}{\vbadness 10000\relax}{}{}
\usepackage{hyperref}
\usepackage[letterpaper,margin=1.1in]{geometry}


\newtheorem{theorem}{Theorem}[section]
\newtheorem{lemma}[theorem]{Lemma}

\theoremstyle{definition}

\theoremstyle{remark}
\newtheorem{remark}[theorem]{Remark}

\newtheorem{question}[theorem]{Question}

\hyphenation{non-empty}

\newcommand{\VMO}{\mathrm{VMO}}

\let\inf\relax \DeclareMathOperator*\inf{\vphantom{p}inf}


\newcommand{\Haus}{\mathcal{H}}
\newcommand{\RR}{\mathbb{R}}

\numberwithin{equation}{section}
\numberwithin{figure}{section}


\newcommand{\excess}{\mathop\mathrm{excess}\nolimits}
\newcommand{\HD}{\mathop\mathrm{HD}\nolimits}
\newcommand{\dist}{\mathop\mathrm{dist}\nolimits}

\newcommand{\Tan}{\mathop\mathrm{Tan}\nolimits}


\def\XXint#1#2#3{{\setbox0=\hbox{$#1{#2#3}{\int}$ }
\vcenter{\hbox{$#2#3$ }}\kern-.6\wd0}}


 \numberwithin{equation}{section}

\title[Slowly vanishing mean oscillations]{Slowly vanishing mean oscillations: non-uniqueness of blow-ups in a two-phase free boundary problem}
\date{October 30, 2022}
\author{Matthew Badger}
\author{Max Engelstein}
\author{Tatiana Toro}
\thanks{M.~Badger was partially supported by NSF DMS grant 2154047. M.~Engelstein was partially supported by NSF DMS grant 2000288 and 2143719. T.~Toro was partially supported by NSF grant DMS 1954545 and by the Craig McKibben \& Sarah Merner Professorship in Mathematics.}
\subjclass[2010]{Primary 31B15, 35R35.}
\keywords{two-phase free boundary problems, harmonic measure, uniqueness of blow-ups}
\address{Department of Mathematics\\ University of Connecticut\\ Storrs, CT 06269-3009}
\email{matthew.badger@uconn.edu}
\address{Department of Mathematics\\University of Minnesota\\Minneapolis, MN, 55455 }
\email{mengelst@umn.edu}
\address{Department of Mathematics\\ University of Washington\\ Box 354350\\ Seattle, WA 98195-4350}
\email{toro@uw.edu}
\begin{document}

\begin{abstract} In Kenig and Toro's two-phase free boundary problem, one studies how the regularity of the Radon-Nikodym derivative $h=d\omega^-/d\omega^+$ of harmonic measures on complementary NTA domains controls the geometry of their common boundary. It is now known that $\log h\in C^{0,\alpha}(\partial\Omega)$ implies that pointwise the boundary has a unique blow-up, which is the zero set of a homogeneous harmonic polynomial. In this note, we give examples of domains with $\log h\in C(\partial\Omega)$ whose boundaries have points with non-unique blow-ups. Philosophically the examples arise from oscillating or rotating a blow-up limit by an infinite amount, but very slowly. \end{abstract}

\dedicatory{Dedicado a Carlos Kenig, un gran maestro y amigo en conmemoraci\'on de sus 70 a\~{n}os.}

\maketitle

\section{Introduction}

In this note, we answer a question about uniqueness of blow-ups in non-variational two-phase free boundary problems for harmonic measure \emph{in the negative}. Throughout, we let $\Omega^+=\Omega\subset\RR^n$ and $\Omega^-=\RR^n\setminus\overline{\Omega}$ denote complementary unbounded domains with a common boundary $\partial\Omega=\partial\Omega^+=\partial\Omega^-$. Furthermore, we require that $\Omega^\pm$ belong to the class of NTA domains in the sense of Jerison and Kenig \cite{jerisonandkenig}. Let $\omega^\pm$ denote harmonic measures on $\Omega^\pm$ with finite poles $X^\pm$ or with poles at infinity (see Kenig and Toro \cite{kenigtoroannals}). Finally, we assume $\omega^+\ll\omega^-\ll\omega^+$ and let \begin{equation}h=\frac{d\omega^-}{d\omega^+}\end{equation} denote the Radon-Nikodym derivative of harmonic measure on one side of the boundary with respect to harmonic measure on the other side. We are interested in understanding how different regularity assumptions on $h$ controls the geometry of $\partial\Omega$.

Following Kenig and Toro \cite{kenigtorotwophase} and Badger \cite{badgerharmonicmeasure}, we know  if $\log h\in\VMO(d\omega^+)$ (vanishing mean oscillation) or $\log h\in C(\partial\Omega)$ (continuous), then the boundary admits a finite decomposition into pairwise disjoint sets, \begin{equation}\label{decomp} \partial\Omega=\Gamma_1\cup\dots\cup\Gamma_{d_0},\end{equation} where geometric blow-ups (tangent sets) of $\partial\Omega$ centered at any $Q\in \Gamma_d$ ($1\leq d\leq d_0$) are zero sets $\Sigma_p$ of homogeneous harmonic polynomials (hhp) $p:\RR^n\rightarrow\RR$ of degree $d$. That is to say, given any boundary point $Q\in\Gamma_d$ and any sequence of scales $r_i>0$ with $\lim_{i\rightarrow\infty} r_i=0$, there exists a subsequence $r_{i_j}$ and a hhp $p$ of degree $d$ such that \begin{equation}\label{blowup-def} \lim_{j\rightarrow\infty}\max\left\{\excess\left(\frac{\partial\Omega-Q}{r_{i_j}}\cap B,\Sigma_p\right),\,\excess\left(\Sigma_p\cap B,\frac{\partial\Omega-Q}{r_{i_j}}\right)\right\}=0\end{equation} for every ball $B$ in $\RR^n$. Here $\excess(S,T)=\sup_{s\in S}\inf_{t\in T} |s-t|$ when $S,T\subset\RR^n$ are nonempty and $\excess(\emptyset,T)=0$; see \cite{localsetapproximation} for more information about this mode of convergence of closed sets (the Attouch-Wets topology). Following \cite{badgerflatpoints} and \cite{BETHarmonicpoly}, we further know that the regular set $\Gamma_1$ is relatively open, Reifenberg flat with vanishing constant, and has Hausdorff and Minkowski dimensions $n-1$, whereas the singular set $\partial\Omega\setminus \Gamma_1$ is closed and has Hausdorff and Minkowski dimension at most $n-3$.

We remark that the maximum degree $d_0$ witnessed in the decomposition \eqref{decomp} can be bounded in terms of the ambient dimension and the NTA constants of $\Omega^\pm$. When $n=2$, it is always the case that $\partial\Omega=\Gamma_1$. When $n=3$, we have $\partial\Omega=\Gamma_1\cup\Gamma_3\cup\dots\cup\Gamma_{2d_1+1}$ (odd degrees only) and for every odd $d\geq 1$, there exist two-sided domains with $\Gamma_d\neq\emptyset$. In dimensions $n\geq 4$, for every integer $d\geq 1$, even or odd, there exist two-sided domains with $\Gamma_d\neq\emptyset$. See \cite{BETHarmonicpoly} for details and \cite{AMTUR, Prats2019, Tolsa2022} for additional results on the regularity of $\Gamma_1$.

One may ask: Are the blow-ups at each point in $\partial\Omega$ unique? In other words, is the zero set $\Sigma_p$ in \eqref{blowup-def} independent of choice of the sequence of scales $r_i$? Under a stronger free boundary regularity hypothesis, the answer is \emph{affirmative}. Following Engelstein \cite{engelsteintwophase} and \cite{BETunique}, we know that if $\log h\in C^{0,\alpha}(\partial\Omega)$ for some $\alpha>0$ (H\"older continuous), then blow-ups are unique. Moreover, when $\log h\in C^{0,\alpha}(\partial\Omega)$, the regular set $\Gamma_1$ is actually a $C^{1,\alpha}$ embedded submanifold and the singular set $\partial\Omega\setminus\Gamma_1$ is $(n-3)$-rectifiable in the sense of geometric measure theory (see e.g.~\cite{Mattila}). Below, we supply examples demonstrating that under the weaker regularity hypothesis $\log h\in C(\partial\Omega)$, there may exist points in the boundary that have non-unique blow-ups.

\begin{theorem}\label{t:main} For each $d\in\{1,3\}$, there exist complementary NTA domains $\Omega^\pm\subset\RR^3$ such that $\log h\in C(\partial\Omega)$, but there exists a point in $\Gamma_d$ at which geometric blow-ups of $\partial\Omega$ are not unique.
\end{theorem}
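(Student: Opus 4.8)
The plan is to construct explicit complementary NTA domains whose common boundary agrees with the zero set $\Sigma_p$ of a degree-$d$ homogeneous harmonic polynomial near the relevant point (the origin, say) at every dyadic scale, but where the \emph{orientation} of the approximating zero set rotates by an infinite total amount as the scale decreases to zero. Concretely, I would fix a degree-$d$ hhp $p$ in $\RR^3$ (for $d=1$ take a plane; for $d=3$ take Szulkin's or the standard axially symmetric example whose zero set is a nondegenerate algebraic variety) and choose a sequence of rotations $R_i\in SO(3)$ with $R_i\to$ a limit that does not exist (e.g.\ $R_i$ a rotation by angle $\theta_i=\sum_{k\le i}1/k$ about a fixed axis, so the angles diverge but consecutive rotations $R_{i+1}R_i^{-1}$ converge to the identity). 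The domain would be built by gluing together, on dyadic annuli $A_i=\{2^{-i-1}\le |X|\le 2^{-i}\}$, perturbations of $R_i\Sigma_p$, interpolated smoothly across annuli so that the global boundary is a genuine NTA graph over $\Sigma_p$ in the Reifenberg sense. Because the angle increments $\theta_{i+1}-\theta_i\to 0$, the boundary is asymptotically self-consistent at small scales, yet the cumulative rotation is infinite, forcing distinct subsequential blow-ups $R\Sigma_p$ for different limit orientations $R$ of the $R_i$'s along different scale sequences.

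The key steps, in order, are as follows. First, I would make the construction precise: define the boundary $\partial\Omega$ as the image under a bi-Lipschitz (indeed asymptotically conformal) map of $\Sigma_p$ that acts on the annulus $A_i$ essentially as $R_i$ composed with the radial scaling, with a controlled transition region between consecutive annuli. One must verify that $\Omega^\pm=\Omega$ and $\RR^3\setminus\overline\Omega$ are NTA domains with uniform constants; this follows because each $R_i\Sigma_p$ is the zero set of an hhp (hence each complementary component is a cone over a domain on $S^2$, which is NTA), and the slow transition preserves the corkscrew and Harnack-chain conditions uniformly. Second, I would establish non-uniqueness of blow-ups: along the scales $r_i=2^{-i}$, the rescaled boundary $(\partial\Omega)/r_i$ converges in the Attouch--Wets topology to $R\Sigma_p$ where $R$ is any subsequential limit of $R_i$ in the compact group $SO(3)$; since $\theta_i\to\infty$, the set of subsequential limits is a full orbit (a circle of rotations), so the blow-up depends on the choice of subsequence and is not unique. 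Third, and this is the crux, I would verify the regularity of harmonic measure, namely $\log h\in C(\partial\Omega)$.

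The hard part will be the third step: showing that one can control the Radon--Nikodym derivative $h=d\omega^-/d\omega^+$ so that $\log h$ is continuous (and in particular extends continuously to the non-unique blow-up point with value matching the two-sided structure of the limiting hhp). The strategy is to exploit that for the \emph{exact} cone over $R_i\Sigma_p$ the harmonic measures $\omega^\pm$ are mutually absolutely continuous with $\log h$ constant (equal to $\log(\partial_\nu^- p/\partial_\nu^+ p)$ in a suitable normalization, reflecting the polynomial's balanced two-phase structure), and that the slow interpolation across annuli produces only a small, decaying perturbation of $h$ at scale $2^{-i}$. Quantitatively, one wants the oscillation of $\log h$ on $B(0,r)\cap\partial\Omega$ to be controlled by the modulus of the rotation increments $\sup_{k\ge i}|\theta_{k+1}-\theta_k|\to 0$, together with the oscillation of $p$ itself on small balls; I would make this rigorous via boundary Harnack principles and comparison estimates for harmonic measure on NTA domains (as in Kenig--Toro and Jerison--Kenig), transferring the near-constancy of $\log h$ on each annulus into genuine continuity at the origin. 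The delicate point is that continuity \emph{without} a H\"older modulus must be arranged precisely because H\"older continuity would force uniqueness by \cite{BETunique}; thus the summability failure of $\sum_i|\theta_{i+1}-\theta_i|$ (which gives infinite total rotation) must coexist with $|\theta_{i+1}-\theta_i|\to 0$ (which gives continuity but not a summable, hence not a H\"older, modulus), and the construction must be tuned so that $\log h$ inherits exactly this borderline modulus of continuity.
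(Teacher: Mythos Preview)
Your construction is essentially the paper's: for $d=3$ the authors also twist Szulkin's zero set by a continuous rotation $\theta(r)=\log(-\log r)$ (note your discrete angles $\theta_i=\sum_{k\le i}1/k\approx\log i\approx\log(-\log 2^{-i})$ are the same function sampled dyadically), and they verify NTA by observing that the twist map $\Phi_\theta$ is a global quasiconformal homeomorphism, asymptotically conformal at the origin. For $d=1$ they do not rotate a plane but instead use the graph $z=x\log|\log r|\sin(\log|\log r|)$ from \cite{ToroJDG}; your rotation idea should also work there, though the graph form makes Ahlfors regularity and the vanishing chord-arc condition immediate.

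Where the paper diverges from your outline is exactly in step~3, and this is where your sketch has a gap. Rather than comparing $h$ on each annulus to the cone value via boundary Harnack, the paper \emph{pulls back} the Green's functions $u^\pm$ by $\Phi_\theta^{-1}$ to the fixed reference domain $\Omega_s^\pm$. The rescaled pull-backs $\tilde u_i^\pm(x)=r_i\,u^\pm(\Phi_\theta(r_i x))/\omega^\pm(B_{r_i})$ then solve a divergence-form equation $\mathrm{div}(B(r_i x)\nabla\,\cdot)=0$ on the \emph{fixed} annulus $B_K\setminus B_{1/K}$, with matrix $B$ Lipschitz (Lipschitz norm $\lesssim K/|\log r_i|\to 0$) and converging to the identity. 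Schauder theory gives uniform $C^{1,\alpha}$ bounds and hence compactness; any subsequential limit is a positive harmonic function on the Szulkin cone vanishing on the boundary, hence equals $\kappa_\pm s$ by the Liouville theorem in NTA cones. The ratio $|\nabla u^+|/|\nabla u^-|$ at $Q_i$ then reduces to $\kappa_+/\kappa_-$.

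The point your outline misses, and which is not recoverable from annulus-by-annulus boundary Harnack alone, is why $\kappa_+=\kappa_-$ and why this is independent of the subsequence. The Green's function is global, so ``$h$ is near the cone value on each annulus'' does not follow from the domain being near a cone on that annulus. The paper resolves this by a \emph{symmetry} argument: since $s$ (and the graph function $v$) is odd, one has $-\Omega^+=\Omega^-$, hence $\omega^+(B_r)=\omega^-(B_r)$ and $u^+(0,0,1)=u^-(0,0,-1)$, which forces $\kappa_+=\kappa_-$ along every subsequence. Without this (or an equivalent device) your step~3 would stall at showing that the subsequential limits of $h(Q_i)$ all coincide.
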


\begin{remark}In fact, the domains that we construct below have \emph{locally finite perimeter} and \emph{Ahlfors regular} boundaries: that is, there exists $C>0$ (depending on $\Omega$) such that \begin{equation}C^{-1}r^{n-1}\leq \Haus^{n-1}(\partial\Omega\cap B(Q,r))\leq Cr^{n-1}\quad\text{for all $Q\in \partial\Omega$ and $r>0$},\end{equation} where $\Omega\subset\RR^n$ and $\Haus^{n-1}$ denotes the $(n-1)$-dimensional Hausdorff measure. Even more, the boundaries of the domains are smooth surfaces outside of a single point.\end{remark}

The basic strategy is to start with a blow-up domain $\Omega_p^\pm=\{X\in\RR^n:\pm p(X)>0\}$ associated to a hhp $p$ of degree $d$, which has  $\log h\equiv 0$ and $0\in\Gamma_d$. We then deform the domain near the origin by introducing rotations/oscillations at each scale $0<r\leq 1/100$ so that the magnitude of the oscillation at scale $r$ vanishes as $r\rightarrow 0$. The tension in the proof becomes choosing the correct speed of vanishing. On the one hand, by choosing the speed to be sufficiently \emph{quick}, we can guarantee by making estimates on elliptic measure that the deformed domain has $\log h \in C(\partial\Omega)$. On the other hand, by choosing the speed to be sufficiently \emph{slow}, we can guarantee that the deformed domain has uncountably many blow-ups at the origin, each of which are rotations of the original domain.

\begin{remark} By a suitable modification, the technique introduced in the case $d=3$ can be used to show existence of domains with $\log h\in C(\partial\Omega)$ and non-unique blow-ups at an isolated point $Q\in\Gamma_d$ for any value of $d\geq 2$. When $d\geq 3$ is odd, the examples can be produced in $\RR^3$. When $d\geq 2$ is even, the examples can be produced in $\RR^4$.\end{remark}

In a related context, Allen and Kriventsov \cite{spiral} use conformal maps to construct domains $\Omega^\pm=\{u^\pm>0\}\subset\RR^n$ ($n\geq 2$) associated to non-negative subharmonic functions $u^\pm$ for which the Alt-Caffarelli-Friedman functional \begin{equation} \Phi(r,u^+,u^-)=\frac{1}{r^4} \int_{B_r(0)}\frac{|\nabla u^+|^2}{|X|^{n-2}}\int_{B_r(0)}\frac{|\nabla u^-|^2}{|X|^{n-2}}\end{equation} has a positive limit as $r\rightarrow 0$, but whose interface $\partial\Omega=\partial\Omega^+=\partial\Omega^-$ does not have a unique tangent plane at the origin. It would be interesting to know whether a suitable modification of their examples satisfy $\log h \in C(\partial \Omega)$. For more on the connection between the ACF functional and two-phase free boundary problems for harmonic measure (originally observed by Kenig, Preiss, and Toro \cite{kenigpreisstoro}), see \cite[\S2.2]{Allen2022} and the references within.


We handle the case $d=3$ of Theorem \ref{t:main} in \S\ref{s:Domain2} and the case $d=1$ in \S\ref{s:Domain1}.

\subsection{Acknowledgments} This paper was completed while M.B. and M.E. were visiting T.T. at MSRI/SLMath in the Fall of 2022; they thank the institute for its hospitality. All three authors would also like to thank Carlos Kenig for his encouragement, kindness, and generosity over many years.

\section{The First Example: Non-Unique Singular Tangents}\label{s:Domain2}

\subsection{Description and Geometric Properties}

\begin{figure}\begin{center}\includegraphics[height=.2\textheight]{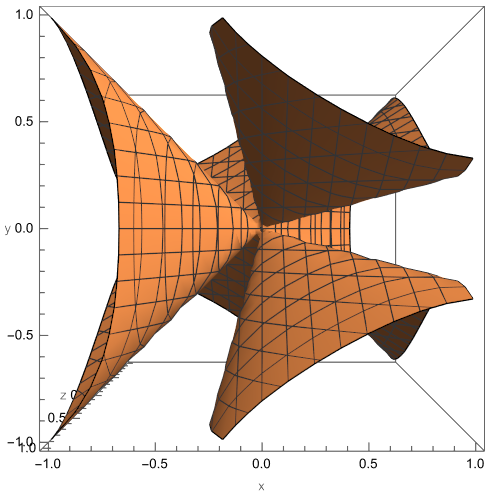}\hspace{.05\textwidth}\includegraphics[height=.22\textheight]{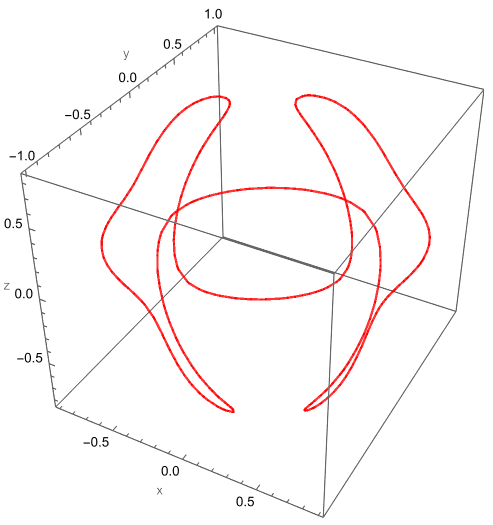}\hspace{.05\textwidth}\includegraphics[height=.2\textheight]{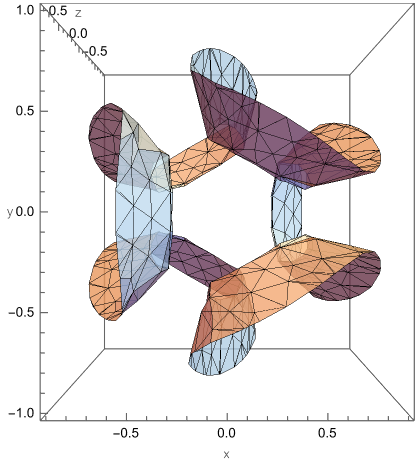}
\end{center}\caption{Left: Szulkin $\Sigma_s$, viewed from the $z$-axis. Center: the curve formed by intersection of Szulkin $\Sigma_s$ and $\mathbb{S}^2$, viewed from a different angle. Right: Szulkin $\Sigma_s$ inside of the annulus $1/2<r<1$, viewed from the $z$-axis.}\label{fig:szulkin}\end{figure}

We begin with Szulkin's example \cite{szulkin} of a degree 3 hhp, \begin{equation}\label{e:szulkindef}
s(x,y,z) = x^3 - 3xy^2 + z^3 - 1.5(x^2 + y^2)z,
\end{equation} with the interesting feature that its zero set $\Sigma_s$ is homeomorphic to $\RR^2$. See Figure \ref{fig:szulkin}.
Because $\Sigma_s$ is a cone ($s$ is homogeneous) and $\Sigma_s\cap S^2$ is a smooth curve\footnote{One can check that $\nabla s(x,y,z)=0\Leftrightarrow (x,y,z)=(0,0,0)$.}, it follows that $\Omega_s^{\pm}=\{(x,y,z)\in\RR^3:\pm s(x,y,z)>0\}$ are complementary NTA domains. Note that the positive $z$-axis belongs to $\Omega_s^+$ and the negative $z$-axis belongs to $\Omega_s^-$, since $s(0,0,\pm1)=\pm1$.

To build $\Omega^\pm$, we deform $\Omega^\pm_s$ by rotating spherical shells $\Sigma_s\cap \partial B_r(0)$ in the $xy$-plane. More precisely, we put $\Omega^{\pm} = \{\pm s_{\mathrm{twist}} > 0\}$, where $s_{\mathrm{twist}} \equiv s\circ \Phi_{-\theta}$ and $\Phi_{\pm\theta}: \mathbb R^3 \rightarrow \mathbb R^3$ are homeomorphisms given by \begin{equation}\label{e:Phidef} \Phi_{\pm\theta}(x,y,z) = (x\cos(\pm\theta) - y \sin(\pm\theta), x\sin(\pm\theta) + y\cos(\pm\theta), z),\end{equation} \begin{equation}\label{e:thetadef} \theta\equiv\theta(r) := \log(-\log(r))\quad\text{for all  }0<r:=\sqrt{x^2+y^2+z^2}\leq 1/100\end{equation} and we smoothly interpolate to $\theta(r):=0$ for all $r\geq 1$. See Figure \ref{fig:twists}.

\begin{figure}\begin{center}\includegraphics[height=.2\textheight]{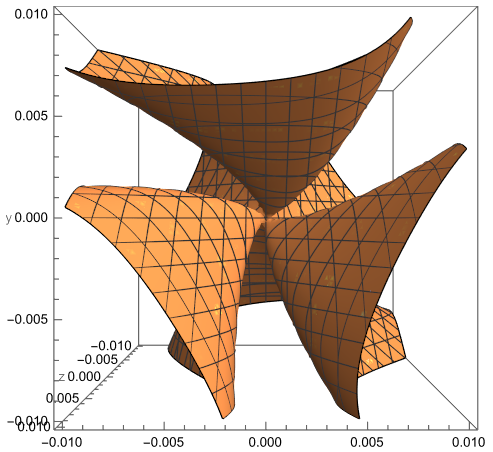}\hspace{.05\textwidth}\includegraphics[height=.2\textheight]{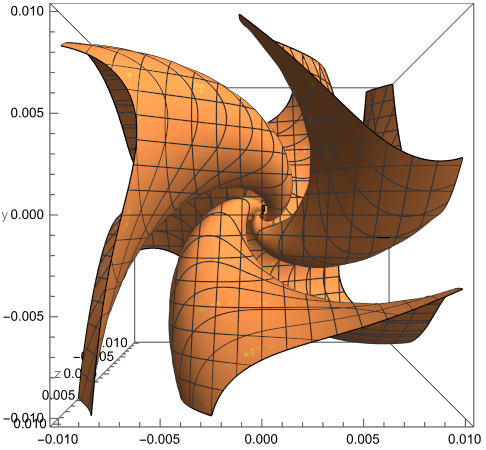}\hspace{.05\textwidth}
\includegraphics[height=.2\textheight]{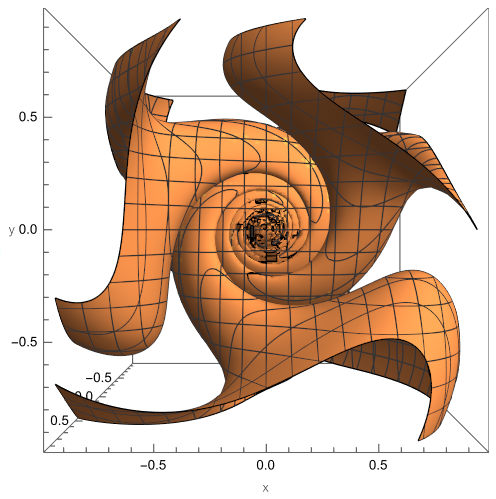}
\end{center}\caption{Examples of twisted Szulkin domains $\Omega^\pm$ defined using various rotation functions $\theta(r)$.\\
 Left: $\theta(r)=\log(-\log(r))$; the domains $\Omega^\pm$ are NTA and $\log h\in C(\partial\Omega)$.\\
  Center: $\theta(r)=-\log(r)$; the domains $\Omega^\pm$ are NTA, but $\log h\not\in \VMO(d\omega^+)$.\\
   Right: $\theta(r)=(-\log(r))^2$; the domains $\Omega^\pm$ are not NTA.}\label{fig:twists}\end{figure}

If $s_{\mathrm{twist}}(x,y,z)=0$, then $\Phi_{-\theta}(x,y,z)\in\Sigma_s$. Hence the interface $\Sigma=\partial\Omega^\pm=\Phi_{\theta}(\Sigma_s)$. Similarly, $\Omega^\pm=\Phi_{\theta}(\Omega_s^\pm)$.

\begin{remark}\label{r:thetafacts}
Let us collect some simple, but useful observations about $\theta$ and $\Phi_{\theta}$.
\begin{enumerate}
\item For any $\theta_0 \in [0, 2\pi)$, there exists a sequence $r_i \downarrow 0$ such that $\theta(r_i) = \theta_0 \pmod{2\pi}$, i.e.~such that $\min_{k\in \mathbb Z} |\theta(r_i)-\theta_0 - 2\pi k| =0$ for all $i\geq 1$.
\item For any sequence $r_i \downarrow 0$, there exists $\theta_0 \in [0, 2\pi)$ and a $r_{i_j} \downarrow 0$ such that $\theta(r_{i_j}) \rightarrow \theta_0 \pmod{2\pi}$, i.e.~$\lim_{j\rightarrow \infty}\min_{k\in\mathbb{Z}}\left|\theta(r_{i_j})-\theta_0-2\pi k\right|=0$.
\item For all $0<r\leq 1/100$, we have $|\nabla \theta| = 1/(-r\log(r))$ and $|\partial_{ij} \theta| \leq C/(-r^2\log(r))$ for all $1\leq i,j\leq 3$.
\item For all $(x,y,z)$ with $0<r\leq 1/100$, we can write $D\Phi_{\theta}=R_{\theta}+E_{\theta}$, where $$R_{\theta}=\begin{pmatrix} \cos(\theta) & -\sin(\theta) & 0 \\ \sin(\theta) & \cos(\theta) & 0 \\ 0 & 0 & 1\end{pmatrix}$$ is a rotation matrix and the ``error matrix'' $E_{\theta}$ is such that $\|E_{\theta}\|_\infty \leq C/(-\log(r))$, where the norm is the sup norm on the entries of $E_{\theta}$.
\item The map $\Phi_{\theta}: \mathbb R^3 \rightarrow \mathbb R^3$ is a quasiconformal homeomorphism, with $\Phi_{\theta}^{-1}=\Phi_{-\theta}$. Moreover, $\Phi_{\theta}$ is asymptotically conformal at the origin.
\end{enumerate}
\end{remark}

\begin{proof} The first property holds since $\theta(r)$ is continuous in $r$ and $\theta(r)\rightarrow\infty$ as $r\downarrow 0$. The second property is true by compactness of the torus $\RR/2\pi$. The third property is a straightforward computation. By another straightforward (if tedious) computation, $D\Phi_{\theta}=R_{\theta}+E_{\theta}$, where $R_{\theta}$ is as above and $E_{\theta}$ is the rank 1 matrix given by $$E_{\theta} = \begin{pmatrix} -x\sin(\theta)-y\cos(\theta) \\ x\cos(\theta)-y\sin(\theta) \\ 0\end{pmatrix}\begin{pmatrix}\theta_x & \theta_y &\theta_z\end{pmatrix}.$$ Let's examine the (1,1) entry of $E_{\theta}$. Since $\theta_x=\theta'(r)r_x=\theta'(r)x/r$ and $|x|\leq r$, we have $$|x\theta_x\sin(-\theta)+y\theta_x\cos(-\theta)|\leq 2r|\theta'(r)|\leq 2/(-\log r).$$ The other non-zero entries of $E_{\theta}$ obey the same estimate. This gives the fourth property. To prove that $\Phi_{\theta}$ is quasiconformal (see e.g.~\cite{WhatIsQC}), it suffices to check that $\Phi_{\theta}\in W^{1,n}_{\mathrm{loc}}$ and there exists $1\leq L<\infty$ such that the a.e.~defined singular values $\lambda_1\leq\lambda_2\leq \lambda_3$ of $D\Phi_{\theta}$ satisfy $\lambda_3\leq L\lambda_1$ a.e. These facts follow from property (iv) and the variational characterization of the minimum and maximum singular values. Furthermore, as $r\downarrow 0$, the maximum ratio of $\lambda_3/\lambda_1$ in $B_r$ goes to 1. Therefore, $\Phi_{\theta}$ is asymptotically conformal at the origin.
\end{proof}

The \emph{Hausdorff distance} $\HD(A,B)=\max\{\excess(A,B),\excess(B,A)\}$ for all nonempty sets $A,B\subset\RR^n$. Note that $\HD(\lambda A,\lambda B)=\lambda\HD(A,B)$ for any dilation factor $\lambda>0$.

\begin{lemma}[twisted Szulkin vs.~rotations of Szulkin] \label{l:HD} If $r,\epsilon,R>0$ and $0<Rr\leq 1/100$, then $\HD(\Sigma\cap B_{Rr},R_{\theta(r)}\Sigma_s \cap B_{Rr}) \leq C\max\big (\epsilon r, \sup\{q|\theta(q)-\theta(r)|:\epsilon r\leq q\leq Rr\}\big).$\end{lemma}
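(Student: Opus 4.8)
The plan is to set up an explicit pointwise correspondence between $\Sigma$ and $R_{\theta(r)}\Sigma_s$, estimate the displacement it creates, and bound both excesses separately. The starting observation is that $\Phi_\theta$ rotates each sphere $\partial B_q(0)$ rigidly by the angle $\theta(q)$; that is, $\Phi_\theta(p)=R_{\theta(|p|)}\,p$ for every $p\in\RR^3$, and since rotations preserve norms, $|\Phi_\theta(p)|=|p|$. Because $\Sigma=\Phi_\theta(\Sigma_s)$, every point $s\in\Sigma$ has the form $s=R_{\theta(q)}p$ with $p\in\Sigma_s$ and $q=|p|=|s|$, and its natural partner on $R_{\theta(r)}\Sigma_s$ is $R_{\theta(r)}p$. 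As this partner also has norm $q$, it lies in $B_{Rr}$ whenever $s$ does, so the correspondence respects the truncation to $B_{Rr}$.

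First I would record the key pointwise estimate. Writing $R_{\theta(q)}-R_{\theta(r)}=R_{\theta(r)}(R_{\theta(q)-\theta(r)}-I)$ and using that $R_{\theta(r)}$ is orthogonal,
\[
|R_{\theta(q)}p-R_{\theta(r)}p| = |(R_{\theta(q)-\theta(r)}-I)p| \leq \|R_{\theta(q)-\theta(r)}-I\|\,|p| \leq q\,|\theta(q)-\theta(r)|,
\]
where the final inequality is the elementary bound $\|R_\phi-I\|=2|\sin(\phi/2)|\leq|\phi|$ for a planar rotation through angle $\phi$.

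Next I would split according to the radius $q=|s|$ of the point under consideration. For $\epsilon r\leq q\leq Rr$ the pointwise estimate bounds the displacement by $\sup\{q|\theta(q)-\theta(r)|:\epsilon r\leq q\leq Rr\}$. For $q<\epsilon r$ the correspondence above is wasteful, since the factor $|\theta(q)-\theta(r)|$ blows up as $q\downarrow 0$; instead I would compare to the origin, exploiting that $\Sigma$ and $R_{\theta(r)}\Sigma_s$ are both cones containing $0\in B_{Rr}$, so that $\dist(s,R_{\theta(r)}\Sigma_s\cap B_{Rr})\leq|s|=q<\epsilon r$. Combining the two cases bounds $\excess(\Sigma\cap B_{Rr},R_{\theta(r)}\Sigma_s\cap B_{Rr})$ by the asserted maximum. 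The argument is symmetric in the two sets --- every point of $R_{\theta(r)}\Sigma_s$ is $R_{\theta(r)}p$ for some $p\in\Sigma_s$, with partner $R_{\theta(q)}p=\Phi_\theta(p)\in\Sigma$ --- so the reverse excess obeys the same bound, and the Hausdorff distance is controlled as claimed.

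I do not anticipate a serious obstacle: this is a clean geometric comparison, and the argument in fact yields an absolute constant (one may take $C=1$). The one point demanding care is the near-origin region $q<\epsilon r$, where the rotation discrepancy $\theta(q)-\theta(r)$ is unbounded; there one must abandon the pointwise correspondence and instead use that both surfaces pass through the origin, which is precisely what the $\epsilon r$ term in the maximum is designed to absorb.
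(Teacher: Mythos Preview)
Your argument is correct and follows essentially the same route as the paper's proof: split into the inner ball $B_{\epsilon r}$ (use $0\in\Sigma\cap R_{\theta(r)}\Sigma_s$) and the annulus $\epsilon r\leq q\leq Rr$ (pair $R_{\theta(q)}p$ with $R_{\theta(r)}p$ and bound the displacement by $q|\theta(q)-\theta(r)|$), with your operator-norm estimate $\|R_\phi-I\|=2|\sin(\phi/2)|$ replacing the paper's equivalent coordinate computation $(2-2\cos\phi)^{1/2}$. One small slip: $\Sigma$ is \emph{not} a cone (only $R_{\theta(r)}\Sigma_s$ is), but your argument only uses that $0\in\Sigma\cap B_{Rr}$, which is true, so the proof stands.
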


\begin{proof} For any $p\in B_{\epsilon r}$, we have $\dist(p,R_{\theta(r)}\Sigma_s\cap B_{Rr})\leq 2\epsilon r$ and $\dist(p,\Sigma\cap B_{Rr})\leq 2\epsilon r$, since $0\in R_{\theta(r)}\Sigma_s$ and $0\in \Sigma$. Thus, the main issue is to estimate distances inside $B_{Rr}\setminus B_{\epsilon r}$.

Let $p\in \Sigma\cap B_{Rr}\setminus B_{\epsilon r}$, say $p\in\Sigma \cap \partial B_{q}$ with $\epsilon r\leq q\leq Rr$. Then we may write $p=R_{\theta(q)}x$ for some $x\in\Sigma_s$. Let's estimate $\dist(p, R_{\theta(r)}\Sigma_s\cap B_{Rr})$ from above by the distance of $p$ to the point $y=R_{\theta(r)}x\in R_{\theta(r)}\Sigma_s\cap \partial B_q$. Note that $y=R_{\theta(r)}x=R_{\theta(r)}R_{-\theta(q)}p=R_{\theta(r)-\theta(q)}p$ and $|y|=|p|=q$. Hence \begin{align*}
|p-y|&\leq q|(1,0,0)-(\cos(\theta(q)-\theta(r)),\sin(\theta(q)-\theta(r)),0)|\\
&= q(2-2\cos(\theta(q)-\theta(r)))^{1/2}\\
&\leq Cq|\theta(q)-\theta(r)|,\end{align*} where the first inequality holds by geometric considerations and the last inequality used the Taylor series expansion for cosine.

A similar inequality holds starting from any $p\in R_{\theta(r)}\Sigma_s\cap B_{Rr}\setminus B_{\epsilon r}$.
\end{proof}

\begin{lemma}\label{l:secondgeometry}
With $\theta(r)=\log(-\log(r))$, the twisted Szulkin domains $\Omega^{\pm}$ as defined above are chord-arc domains (i.e.~NTA domains with Ahlfors regular boundaries). The interface $\Sigma=\partial\Omega^\pm$ has a continuum of blow-ups at the origin, each of which is a rotation of $\Sigma_s$ in the $xy$-plane.
\end{lemma}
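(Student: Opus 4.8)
The plan is to prove the two assertions separately: the chord-arc property is a soft consequence of the bi-Lipschitz nature of $\Phi_\theta$, while the description of the blow-ups is the heart of the matter and depends on the precise slow growth of $\theta$.

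\smallskip
\textbf{Chord-arc property.} First I would observe that $\Phi_\theta\colon\RR^3\to\RR^3$ is a global bi-Lipschitz homeomorphism with inverse $\Phi_{-\theta}$. Indeed, Remark \ref{r:thetafacts}(iv) gives $D\Phi_\theta=R_\theta+E_\theta$ with $\|E_\theta\|_\infty\le C/(-\log r)\to 0$ as $r\to 0$, so the singular values of $D\Phi_\theta$ tend to $1$ near the origin; on the compact shell $1/100\le r\le 1$ the interpolation can be chosen so that $r|\theta'(r)|$ is bounded, keeping $\|E_\theta\|_\infty$ bounded, and for $r\ge 1$ the map is the identity. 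Hence the singular values of $D\Phi_\theta$ are bounded above and below uniformly, and since $\Phi_{-\theta}$ has the same structure, both $\Phi_{\pm\theta}$ are globally Lipschitz. Because the corkscrew and Harnack-chain conditions are preserved up to constants under bi-Lipschitz homeomorphisms, and $\Omega_s^\pm$ are NTA, the complementary domains $\Omega^\pm=\Phi_\theta(\Omega_s^\pm)$ are NTA. Likewise, since $\Sigma_s$ is the zero set of a homogeneous polynomial with smooth cross section it is Ahlfors regular, and bi-Lipschitz maps preserve Ahlfors regularity; thus $\Sigma=\Phi_\theta(\Sigma_s)$ is Ahlfors regular and $\Omega^\pm$ are chord-arc. (Smoothness of $\Sigma\setminus\{0\}$ follows because $\Phi_\theta$ restricts to a smooth diffeomorphism away from the origin.)

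\smallskip
\textbf{The blow-up estimate.} The key quantitative input is that the rotation is slow enough to be nearly constant across any fixed annulus after rescaling. Fixing $R\ge 1$ and $\epsilon\in(0,1)$, for $q=tr$ with $t\in[\epsilon,R]$ one computes
\[
\theta(q)-\theta(r)=\log\frac{\log(tr)}{\log r}=\log\Bigl(1+\frac{\log t}{\log r}\Bigr),
\]
so that $|\theta(q)-\theta(r)|\le C_{R,\epsilon}/(-\log r)$ and hence $\sup\{q|\theta(q)-\theta(r)|:\epsilon r\le q\le Rr\}\le C_{R,\epsilon}\,r/(-\log r)$. Inserting this into Lemma \ref{l:HD} and rescaling by $1/r$ (using $\tfrac1r R_{\theta(r)}\Sigma_s=R_{\theta(r)}\Sigma_s$ since $\Sigma_s$ is a cone, together with $\HD(\lambda A,\lambda B)=\lambda\HD(A,B)$) yields
\[
\HD\Bigl(\tfrac{1}{r}\Sigma\cap B_R,\,R_{\theta(r)}\Sigma_s\cap B_R\Bigr)\le C\max\Bigl(\epsilon,\tfrac{C_{R,\epsilon}}{-\log r}\Bigr).
\]
Given $\delta>0$, choosing $\epsilon$ small and then $r$ small makes the right-hand side $<\delta$; since $R$ is arbitrary, the rescaled interface $\tfrac1r\Sigma$ is asymptotic to $R_{\theta(r)}\Sigma_s$ in the Attouch-Wets topology as $r\to 0$.

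\smallskip
\textbf{Both inclusions and the continuum.} The two inclusions then follow from Remark \ref{r:thetafacts}. For surjectivity, given $\theta_0\in[0,2\pi)$, property (i) produces $r_i\downarrow 0$ with $\theta(r_i)\equiv\theta_0\pmod{2\pi}$, so $R_{\theta(r_i)}\Sigma_s=R_{\theta_0}\Sigma_s$ and the estimate gives $\tfrac{1}{r_i}\Sigma\to R_{\theta_0}\Sigma_s$. For the reverse inclusion, given any $r_i\downarrow 0$, property (ii) extracts a subsequence with $\theta(r_{i_j})\to\theta_0\pmod{2\pi}$; since rotations are isometries, $\HD(R_{\theta(r_{i_j})}\Sigma_s\cap B_R,R_{\theta_0}\Sigma_s\cap B_R)\le R\,\dist(\theta(r_{i_j}),\theta_0)\to 0$, and combining with the estimate identifies the blow-up as $R_{\theta_0}\Sigma_s$. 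Finally, because $\Sigma_s$ is invariant under rotation by $2\pi/3$ (the term $x^3-3xy^2=\mathrm{Re}((x+iy)^3)$ has exactly this symmetry, the remaining terms being radial in $(x,y)$) but under no continuous family of rotations, the rotational symmetry group in the $xy$-plane is a proper closed subgroup of the circle, hence finite; thus $\theta_0\mapsto R_{\theta_0}\Sigma_s$ has finite fibers, its image is uncountable, and we obtain a continuum of distinct blow-ups at $0$.

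\smallskip
\textbf{Main obstacle.} I expect the crux to be the blow-up estimate, namely verifying that the speed $\theta(r)=\log(-\log r)$ is slow enough that the angle is essentially constant on each annulus $\{\epsilon r\le|X|\le Rr\}$ after rescaling (so the rescaled interface converges to a genuine rotation of $\Sigma_s$), yet fast enough to diverge and equidistribute modulo $2\pi$ (so the blow-up is non-unique). Balancing these competing requirements---the $o(r)$ decay of $\sup_q q|\theta(q)-\theta(r)|$ against $\theta(r)\to\infty$---is the essential point; the chord-arc claim is by comparison a routine consequence of the bi-Lipschitz bounds in Remark \ref{r:thetafacts}.
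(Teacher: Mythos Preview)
Your proof is correct and follows essentially the same route as the paper: chord-arc via the regularity of $\Phi_\theta$, blow-ups via Lemma~\ref{l:HD} together with Remark~\ref{r:thetafacts}(i)--(ii). The implementation differs in small but pleasant ways: you observe that $\Phi_\theta$ is in fact globally bi-Lipschitz (the paper invokes only quasiconformality, which suffices for NTA preservation but requires a separate argument for upper Ahlfors regularity); you compute $\theta(tr)-\theta(r)=\log(1+\log t/\log r)$ directly rather than appealing to the mean value theorem; and you handle the limit by first fixing $\epsilon$ and then sending $r\to 0$, whereas the paper chooses a specific $\epsilon(r)=|\log r|^{-1/2}$ tending to zero in tandem. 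Your final paragraph, verifying via the threefold symmetry of $x^3-3xy^2$ that the rotational stabilizer of $\Sigma_s$ is finite and hence the blow-ups genuinely form a continuum, is a nice detail the paper leaves implicit.
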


\begin{proof} The domains $\Omega^\pm=\Phi_{\theta}(\Omega^\pm_s)$ are NTA, because global quasiconformal maps send NTA domains to NTA domains. Every boundary of an NTA domain is lower Ahlfors regular (see e.g.~\cite[Lemma 2.3]{Badger-nullsets}). Thus, $\Sigma$ is lower Ahlfors regular. To check upper Ahlfors regularity, first note that $\Sigma_s$ is upper Ahlfors regular, since $\Sigma_s$ can be covered by a finite number of Lipschitz graphs. Since $\|\det(D\Phi_{\theta})\|_\infty <\infty$, it follows that $\Sigma=\Phi_{\theta}(\Sigma_s)$ is upper Ahlfors regular, as well.

Let's address the blow-ups of $\partial \Omega$ at the origin. Let $r_i \downarrow 0$ and suppose initially that $\theta(r_i)=\theta_0 \pmod{2\pi}$ for all $i$. Let $\epsilon(r)$ be a function of $r$ to be specified below. Let $R\gg 1$ be a large radius. By Lemma \ref{l:HD}, the homogeneity of the Hausdorff distance, and the mean value theorem, we have \begin{align*}\HD(r_i^{-1}&\Sigma\cap B_{R},R_{\theta_0}\Sigma_s \cap B_{R}) \\
&\leq Cr_i^{-1}\max\big (\epsilon(r_i)r_i, \sup\{q|\theta(q)-\theta(r_i)|:\epsilon(r_i) r_i\leq q\leq Rr_i\}\big)\\
&\leq C\max\big(\epsilon(r_i), \sup\{t|\theta(tr_i)-\theta(r_i)|:\epsilon(r_i)\leq t\leq R\}\big)\\
&\leq C\max\big(\epsilon(r_i), R(R-1)r_i\sup\{|\theta'(tr_i)|:\epsilon(r_i)\leq t\leq R\}\big).\end{align*} Our task is to choose $\epsilon(r_i)$ so that \begin{equation}\label{how-to-choose-epsilon} \lim_{i\rightarrow\infty} \epsilon(r_i)=0\quad\text{and}\quad \lim_{i\rightarrow\infty}\sup\{r_i|\theta'(tr_i)|:\epsilon(r_i)\leq t\leq R\}=0.\end{equation} Since $|\theta'(r)|=1/(-r\log r)$, we have $\sup\{r_i|\theta'(tr_i)|:\epsilon(r_i)\leq t\leq R\}\leq 1/(-\epsilon(r_i)\log(Rr_i))$ for all sufficiently large $i$ (i.e.~for all sufficiently small $r_i$). Thus, \eqref{how-to-choose-epsilon} is satisfied (e.g.) by choosing $\epsilon(r) = |\log(r)|^{-1/2}$. It follows that $\lim_{i\rightarrow\infty} \HD(r_i^{-1}\Sigma\cap B_{R},R_{\theta_0}\Sigma_s \cap B_{R})=0$ for all $R>0$. This implies that $\Sigma/r_i$ converge to $R_{\theta_0}\Sigma_s$ in the sense of \eqref{blowup-def}.

In the general case, starting from any sequence $r_i\downarrow 0$, pass to a subsequence such that $\theta(r_i)\rightarrow\theta_0\pmod{2\pi}$. One can readily check that $R_{\theta(r_i)}\Sigma_s$ converges to $R_{\theta_0}\Sigma_s$ in the Attouch-Wets topology. Therefore, $\Sigma/r_i$ converges to $R_{\theta_0}\Sigma_s$ in the sense of \eqref{blowup-def} by the special case and the triangle inequality for excess.
\end{proof}

\begin{remark}For all exponents $0<p<1$, the twisted Szulkin domains defined using the rotation function $\theta(r)=(-\log(r))^p$ also satisfy the conclusions of Lemma \ref{l:secondgeometry}. However, there is phase transition at $p=1$. When $\theta(r)=-\log(r)$, one can show that the blow-ups of $\Sigma$ are no longer zero sets of hhp. The essential difference is that the ``speed of rotation'' vanishes as one zooms-in at the origin when $p<1$, but the ``speed of rotation'' is constant when $p=1$. When $p>1$, the ``speed of rotation'' goes to infinity as one zooms-in at the origin and the associated twisted Szulkin domains $\Omega^\pm$ are not even NTA. See Figure \ref{fig:twists}.\end{remark}

\subsection{Potential Theory for the First Example}\label{ss:potentialsecond}

Let $r_i \downarrow 0$ be an arbitrary sequence of radii going to zero and let $K \gg 1$. Recall that $\Sigma \cap (B_{Kr_i}\backslash B_{r_i/K}) = \Phi_{\theta}(\Sigma_s \cap (B_{Kr_i} \backslash B_{r_i/K})).$ Set \begin{equation}\label{rescaled-u}\tilde{u}^\pm_i(x) =  \frac{u^{\pm}\circ \Phi_{-\theta}^{-1}(r_ix)r_i}{\omega^{\pm}(B_{r_i})},\end{equation} where $u^{\pm}$ are the Green's functions with poles at infinity for $\Omega^{\pm}$. Then in $\Omega_s^\pm \cap B_K\backslash B_{1/K}$, we have that  $\tilde{u}^{\pm}_i$ satisfies $$-\mathrm{div}(B(r_ix)\nabla -) = 0,\qquad B = (\mathrm{det}D\Phi_\theta)^{-1}(D\Phi_\theta)(D\Phi_\theta)^T$$ and $\Phi_\theta$ is as in \eqref{e:Phidef}.

To see that $B(r_ix)$ is Lipschitz regular, we note that Remark \ref{r:thetafacts}(iii) implies that $\|DB\| \leq \frac{C}{r\log(r)}$. Therefore, using the fundamental theorem of calculus along curves which stay in the annulus $B_K \backslash B_{1/K}$ \begin{equation}\label{eq:bbound} \|B(r_ix) - B(r_iy)\| \leq Cr_i|x-y|\sup_{B_{Kr_i}\backslash B_{r_i/K}} \|DB\| \leq \frac{CK}{|\log(r_i)|}|x-y|, \forall x,y \in B_K\backslash B_{1/K},\end{equation} where $C > 0$ is independent of $i,K$.  This uniform Lipschitz continuity immediately implies the next result:



\begin{lemma}\label{l:c1alphaconv}
Let $\alpha \in (0,1), K > 1$. The sequence $\tilde{u}^{\pm}_i$ is pre-compact in $C^{1,\alpha}(\Omega^\pm_s\cap B_{K}\setminus B_{1/K})$. Furthermore, there exists a subsequence along which $\tilde{u}^{\pm}_i \rightarrow \kappa s$, uniformly on compacta, where $s$ is the Szulkin polynomial, for some $\kappa>0$.
\end{lemma}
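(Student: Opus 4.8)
The plan is to view each $\tilde u^\pm_i$ as a nonnegative solution of a uniformly elliptic divergence-form equation whose coefficients converge to the identity, and to combine interior/boundary elliptic estimates with a harmonic-measure normalization to extract a limit and identify it. I argue on the plus side; the minus side is identical after replacing $s$ by $-s$ (the sign being absorbed into $\kappa$). First I would record three structural facts. By Remark \ref{r:thetafacts}(iv)--(v), the coefficient matrices $A_i(x):=B(r_ix)$ are uniformly elliptic on $B_K\setminus B_{1/K}$, with ellipticity bounds depending only on the quasiconformal constant of $\Phi_\theta$, hence independent of $i$ and $K$. By \eqref{eq:bbound} they are uniformly Lipschitz there, with constant $\le CK/|\log r_i|\to 0$; in particular $A_i\to I$ uniformly on the annulus, since $B=(\det D\Phi_\theta)^{-1}(D\Phi_\theta)(D\Phi_\theta)^T$ and $D\Phi_\theta=R_\theta+E_\theta$ with $R_\theta$ a rotation and $\|E_\theta\|=O(1/|\log r|)$, so that $B=I+O(1/|\log r|)$ irrespective of the oscillating angle $\theta$. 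Finally, each $\tilde u^\pm_i\ge 0$ solves $\mathrm{div}(A_i\nabla \tilde u^\pm_i)=0$ and vanishes on $\Sigma_s$, because $u^\pm$ vanishes on $\partial\Omega^\pm=\Phi_\theta(\Sigma_s)$ and $\Sigma_s$ is a cone invariant under $x\mapsto r_i^{-1}x$.

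The crux of the normalization is a uniform two-sided bound at a fixed corkscrew point $x_0\in\Omega_s^\pm$ with $|x_0|\approx 1$. Since $\Phi_\theta$ preserves spheres and has bounded distortion, $\Phi_\theta(r_ix_0)$ is a corkscrew point of $\Omega^\pm$ at scale $r_i$, so the Caffarelli--Fabes--Mortola--Salsa estimates for NTA domains give $u^\pm(\Phi_\theta(r_ix_0))\approx \omega^\pm(B_{r_i})/r_i$ (recall $n=3$), whence $\tilde u^\pm_i(x_0)\approx 1$ with constants depending only on the NTA and quasiconformal data. Interior Harnack chains then control $\tilde u^\pm_i$ from above and below on compact interior subsets of the annulus, and since $\tilde u^\pm_i$ vanishes on the smooth surface $\Sigma_s\cap(B_K\setminus B_{1/K})$, boundary H\"older estimates (or a barrier/maximum-principle comparison) upgrade this to the uniform bound $\sup_{\Omega_s^\pm\cap(B_K\setminus B_{1/K})}\tilde u^\pm_i\le C(K)$.

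With these uniform $L^\infty$ bounds and uniformly elliptic, uniformly Lipschitz coefficients, interior and boundary $C^{1,\alpha'}$ estimates for divergence-form equations (De Giorgi--Nash together with Schauder theory, using that $\Sigma_s$ is smooth away from the origin and the Dirichlet data is zero) yield uniform $C^{1,\alpha'}$ bounds for any $\alpha'\in(\alpha,1)$; Arzel\`a--Ascoli and the compact embedding $C^{1,\alpha'}\hookrightarrow C^{1,\alpha}$ give the asserted pre-compactness for each fixed $K$. Diagonalizing over $K=2,3,\dots$ then produces a single subsequence with $\tilde u^\pm_i\to v^\pm$ in $C^{1,\alpha}$ on compacta.

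Because $A_i\to I$, the limit $v^\pm$ is a nonnegative harmonic function on $\Omega_s^\pm$, nonzero by the lower bound at $x_0$, vanishing on $\Sigma_s$, and realized as a blow-up of the pole-at-infinity Green's function. I would conclude by invoking uniqueness (up to a multiplicative constant) of the Green's function with pole at infinity on the NTA cone $\Omega_s^\pm$, which forces $v^\pm=\kappa s$ for some $\kappa>0$, using that $s$ is itself a homogeneous harmonic function, positive on $\Omega_s^+$ and vanishing on $\partial\Omega_s^+$. I expect this last identification---and the matching normalization that simultaneously keeps the limit nontrivial and bounded---to be the main obstacle, since it is here that the harmonic-measure theory (the CFMS estimates, doubling, and uniqueness of the pole-at-infinity Green's function) enters in an essential way, rather than soft elliptic regularity.
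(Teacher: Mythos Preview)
Your argument is essentially the same as the paper's: uniform ellipticity and the Lipschitz bound \eqref{eq:bbound}, an $L^\infty$ bound via the CFMS/doubling estimates for NTA domains, Schauder theory for precompactness, and then identification of the limit as a multiple of $s$ by uniqueness of the (pole-at-infinity) Green function on the NTA cone $\Omega_s^\pm$. The paper is terser about the $L^\infty$ bound (it just cites CFMS and doubling), while you spell out the corkscrew and Harnack-chain mechanism, but the content is the same.

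The one point you skip that the paper supplies is why the \emph{same} constant $\kappa$ works for both signs. Your remark that ``the minus side is identical after replacing $s$ by $-s$'' only yields $\tilde u^+_\infty=\kappa_+ s$ and $\tilde u^-_\infty=-\kappa_- s$ with a priori unrelated $\kappa_\pm>0$, which is not enough for the application (showing $|\nabla u^+|/|\nabla u^-|\to 1$). The paper closes this by noting the symmetry $-\Omega^+=\Omega^-$ (so $u^+(0,0,1)=u^-(0,0,-1)$ and $\omega^+(B_r)=\omega^-(B_r)$) together with the fact that the $z$-axis is fixed by $\Phi_\theta$, which forces $\tilde u^+_\infty(0,0,1)=\tilde u^-_\infty(0,0,-1)$ and hence $\kappa_+=\kappa_-$. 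You should add this observation.
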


\begin{proof}
We see that $\tilde{u}^\pm_i$ solves an elliptic PDE with coefficients that are Lipschitz continuous and elliptic with coefficients independent of $i$. Furthermore, $$\sup_{B_{4K}} |\tilde{u}^\pm_i| \leq C\Leftrightarrow \sup_{B_{4Kr_i}} |u^+| \leq C\frac{\omega^+(B_{r_i})}{r_i}.$$ The latter inequality holds (with a $C > 0$ that depends on $K$) by the Caffarelli-Fabes-Mortola-Salsa and doubling estimates on harmonic measure in NTA domains, see e.g. \cite{jerisonandkenig}. Then Schauder theory tells us that $\tilde{u}^\pm_i$ are uniformly in $C^{1,\alpha}(\overline{\Omega_s^+} \cap B_K \backslash B_{1/K})$ for any $\alpha \in (0,1)$; see \cite[Theorem 8.3]{GT-book}. The precompactness follows.

Passing to a subsequence, we get that the sequences converges to functions $\tilde{u}^\pm_\infty$, which solves $-\mathrm{div}(B_\infty \nabla \tilde{u}^\pm_\infty) = 0$ in $\Omega_s^\pm\cap B_{K}\backslash B_{1/K}$. From \eqref{eq:bbound} we see that $B_{\infty} = \mathrm{Id}$ and so, invoking a diagonal argument,  $\tilde{u}^\pm_i \rightarrow \tilde{u}^\pm_\infty$, uniformly on compacta in $\mathbb R^3$. Furthermore, $\tilde{u}^{\pm}_\infty$ are positive harmonic functions in $\Omega^{\pm}_s$ that vanish on $(\Omega^{\pm}_s)^c$.

Since $(\Omega^{\pm}_s)^c$ are (global) NTA domains, the boundary Harnack inequality implies that there are scalars $\kappa_{\pm} > 0$ such that $\tilde{u}^{\pm}_\infty = \kappa_{\pm} s$ (see \cite[Lemma 3.7 and Corollary 3.2]{kenigtoroannals}). 

To wrap up, let us again note that the points $(0,0,\pm 1) \in \Omega_s^\pm$ are invariant under $\Phi_\theta$. Furthermore by symmetry $u^+(0,0,1) = u^-(0,0,-1)$ and $\omega^+(B_r) = \omega^-(B_r)$ for all $r$. Thus, $u_\infty^+(0,0,1) = u_\infty^-(0,0,-1)$ and this number determines the constant of proportionality with $s$.
\end{proof}

Finally, the proof of the continuity of $\log h$ follows immediately:

\begin{proof}[Proof of $\log h \in C(\partial \Omega)$]
We note that away from the origin, $\partial \Omega$ is smooth so continuity of the Radon-Nikodym derivative follows from classical potential theory. Furthermore, arguing by symmetry (that is, $-\Omega^+ = \Omega^-$) we have that $\omega^{+}(B(0,r)) = \omega^-(B(0,r))$ for all $r > 0$. Thus, recalling that $u^{\pm}$ are the Green's function for $\Omega^{\pm}$ respectively, we are done if we can show that  $$\lim_{\partial \Omega \ni Q \rightarrow 0} \frac{|\nabla u^+|(Q)}{|\nabla u^-|(Q)} = 1.$$ (Recall that where $\partial \Omega$ is smooth, $C^{1,\alpha}$ is sufficient, the Radon-Nikodym derivative is given by the ratio of the derivatives of the Green functions \cite{Kellogg12}).

Let $Q_i \in \partial \Omega$ with $Q_i \rightarrow 0$ and let $|Q_i| = r_i \downarrow 0$. Let $\tilde{u}^\pm_i$ be given by \eqref{rescaled-u}. Then $$\frac{\omega^{\pm}(B_{r_i})}{r_i^2} D\Phi_\theta(r_ix)\nabla \tilde{u}^{\pm}_i(x) = \nabla u^{\pm}(\Phi_{-\theta}^{-1}(r_ix)).$$ Let $\tilde{Q}_i = \Phi_\theta(Q_i)/r_i \in \Sigma_s \cap \partial B_1$. We have shown that $$\frac{|\nabla u^+|(Q_i)}{|\nabla u^-|(Q_i)}
= \frac{|D\Phi_\theta(r_i \tilde{Q}_i) \nabla \tilde{u}^{+}_i(\tilde{Q}_i)|}{|D\Phi_\theta(r_i\tilde{Q}_i)\nabla \tilde{u}^{-}_i(\tilde{Q}_i)|}.$$ Continuity of $\log h$ follows from Lemma \ref{l:c1alphaconv} (the lemma implies that $\tilde{u}^{\pm} \rightarrow \kappa s$ in $C^{1,\alpha}(\overline{\Omega_s}\cap B_2\backslash B_{1/2})$) and the fact that along some subsequence $D\Phi_\theta(r_ix) \rightarrow R_{\theta_0}$ for some $\theta_0$ (depending on the subsequence).
\end{proof}

\section{The Second Example: Non-Unique Flat Tangents}\label{s:Domain1}

\subsection{Description and Geometric Properties}

To show non-uniqueness at ``flat points" we adapt an example from \cite{ToroJDG}. We set $\Omega^{\pm}=\{(x,y,z)\in\RR^3: \pm (z -v(x,y)) > 0\},$ where $v:\RR^2\rightarrow \RR$ is defined by setting $v(0,0)=0$,  $$v(x,y) = x \log|\log(r)| \sin(\log|\log(r)|)\quad\text{when } 0<r=(x^2+y^2)^{1/2} \leq 1/100,$$ and smoothly (e.g. $C^{1,\alpha}$) interpolating to $v(x,y)=1$ when $r \geq 1$.

\begin{lemma}[see {\cite[Example 2]{ToroJDG}}]\label{l:propertiesofsurface}
The graph domains $\Omega^\pm$ are chord-arc domains. The interface $\Sigma=\partial \Omega^\pm$ has a continuum of blow-ups at the origin, each of which is a plane $z=mx$ with ``slope'' $-\infty\leq m\leq \infty$.
\end{lemma}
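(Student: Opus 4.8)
The plan is to mirror the two-part structure used for the twisted Szulkin domains in Lemma~\ref{l:secondgeometry}: first verify the chord-arc property, then analyze the blow-ups at the origin via a Hausdorff-distance estimate. Since $\Omega^\pm$ are graph domains $\{\pm(z-v(x,y))>0\}$, the chord-arc property reduces to showing that $v$ is Lipschitz with a finite Lipschitz constant, since a domain lying above a Lipschitz graph is automatically NTA with an Ahlfors regular boundary. The key is to differentiate $v(x,y)=x\log|\log r|\sin(\log|\log r|)$ and check that $\nabla v$ stays bounded as $r\to0$. Writing $L(r):=\log|\log r|$, one has $\partial_x v = L\sin L + x\,\partial_x(L\sin L)$ and $\partial_y v = x\,\partial_y(L\sin L)$; since $L'(r)=1/(r\log r)$ carries an extra factor of $1/r$ that is cancelled by the factor $|x|\le r$ out front, the potentially singular terms remain bounded, while $L\sin L$ itself grows only like $\log|\log r|\to\infty$. \textbf{This is the first point requiring care:} $L\sin L$ is \emph{unbounded}, so $\nabla v$ is \emph{not} bounded and the graph is not globally Lipschitz in the naive sense. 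I expect the resolution (as in \cite{ToroJDG}) is that the relevant quantity for the chord-arc property is that the oscillation of the graph over a ball $B_r$ is controlled: the graph stays within a narrow cone because $|v(x,y)|\le |x|\,L(r)\,|\sin L(r)| \le r\,L(r)$, and although $L(r)=\log|\log r|\to\infty$, it does so slowly enough that the Reifenberg-type flatness and corkscrew/Harnack-chain conditions can still be verified directly, or one cites \cite[Example 2]{ToroJDG} for exactly this conclusion.

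For the blow-up analysis, I would fix a sequence $r_i\downarrow0$ and study the rescaled graphs $\Sigma/r_i$, whose defining function is $z = v(r_i x,r_i y)/r_i = x\,L(r_i|\,\cdot\,|)\sin(L(r_i|\,\cdot\,|))$ written in rescaled coordinates. The heuristic is that over the annulus $\epsilon\le |(x,y)|\le R$, the quantity $L(r_i t)\sin(L(r_i t))$ for $t\in[\epsilon,R]$ is, to leading order, \emph{nearly constant in $t$}: indeed $\frac{d}{dt}\big(L(r_it)\sin L(r_it)\big)$ carries a factor $L'(r_it)\cdot r_i = 1/(t\log(r_it))\to 0$, so the slope $m_i := L(r_i)\sin(L(r_i))$ governs the whole rescaled graph up to an error tending to $0$ on compacta. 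The correct analogue of Lemma~\ref{l:HD} here is a Hausdorff-distance bound $\HD\big(\Sigma/r_i\cap B_R,\ \{z=m_i x\}\cap B_R\big)\le C\max\big(\epsilon_i,\ \sup_{\epsilon_i\le t\le R} R\,|\partial_t(L\sin L)(r_it)|\big)$, with $\epsilon_i=\epsilon(r_i)\to0$ chosen exactly as in the proof of Lemma~\ref{l:secondgeometry} (e.g.\ $\epsilon(r)=|\log r|^{-1/2}$) to kill both terms. This shows $\Sigma/r_i$ converges to the plane $z=m_i x$ in the sense of \eqref{blowup-def} whenever $m_i$ converges.

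It then remains to identify which slopes $m=\lim m_i$ are achievable. Since $m(r)=L(r)\sin(L(r))$ with $L(r)=\log|\log r|\to\infty$ continuously, the function $m(r)$ oscillates with unboundedly growing amplitude, so by the intermediate value theorem $m(r)$ attains \emph{every} real value along sequences $r_i\downarrow0$, and by letting the amplitude run to $\pm\infty$ one also realizes the limiting ``vertical'' planes $m=\pm\infty$ (i.e.\ $x=0$). This gives the full continuum $-\infty\le m\le\infty$ of blow-up planes claimed, paralleling Remark~\ref{r:thetafacts}(i)--(ii) for the Szulkin case. \textbf{The main obstacle} I anticipate is the borderline growth of the amplitude $L\sin L$: it must be shown to be large enough to sweep out all slopes (including $\pm\infty$) yet slow enough—thanks to the double-logarithm—that the derivative estimate forcing \eqref{how-to-choose-epsilon} still holds and the graph remains chord-arc. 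Handling the $m=\pm\infty$ cases requires slightly more care than the finite-slope case, since there the rescaled graph becomes nearly vertical and one should rescale in the $z$-direction (or equivalently pass to the limit of the planes $z=m_ix$ in the Attouch--Wets topology) to identify the limit as the vertical plane $\{x=0\}$; I would treat this exactly as the ``general case'' passage at the end of the proof of Lemma~\ref{l:secondgeometry}, using compactness of the space of planes through the origin together with the triangle inequality for excess.
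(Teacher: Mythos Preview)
The paper gives no proof of this lemma at all: the bracket ``see \cite[Example 2]{ToroJDG}'' is the entire argument, and the text moves directly to Remark~\ref{rem:actuallyvanishing}. So there is nothing internal to compare your sketch against; you have in effect supplied a proof where the authors chose to cite one.

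Your outline is sound and parallels Lemma~\ref{l:secondgeometry} in the way you intend. On the chord-arc side, you correctly flag the obstacle: $\partial_x v$ contains the term $L(r)\sin L(r)$, which is unbounded, so $\Sigma$ is \emph{not} a Lipschitz graph over $\{z=0\}$ and the naive route fails. The resolution in \cite{ToroJDG} is not a cone/Reifenberg argument but rather that $v\in W^{2,2}(\RR^2)$ (the paper records this in Remark~\ref{rem:actuallyvanishing}); Toro's theorem then yields a global bi-Lipschitz parameterization of the graph, and bi-Lipschitz images of half-spaces are chord-arc, giving both NTA and Ahlfors regularity at once. For the blow-ups, your computation is correct: the controlling estimate is $\partial_t\big(L(r_it)\sin L(r_it)\big)=O\big(L(r_i)/|\log r_i|\big)\to 0$ uniformly for $t$ in compacta of $(0,\infty)$, so $\Sigma/r_i$ is Attouch--Wets close to the plane $z=m_i x$ with $m_i=L(r_i)\sin L(r_i)$. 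The intermediate value theorem together with compactness of the space of planes through the origin then gives the full range $-\infty\le m\le\infty$, including the vertical plane $\{x=0\}$, exactly as you say.
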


%

\begin{figure}\begin{center}\includegraphics[height=.15\textheight]{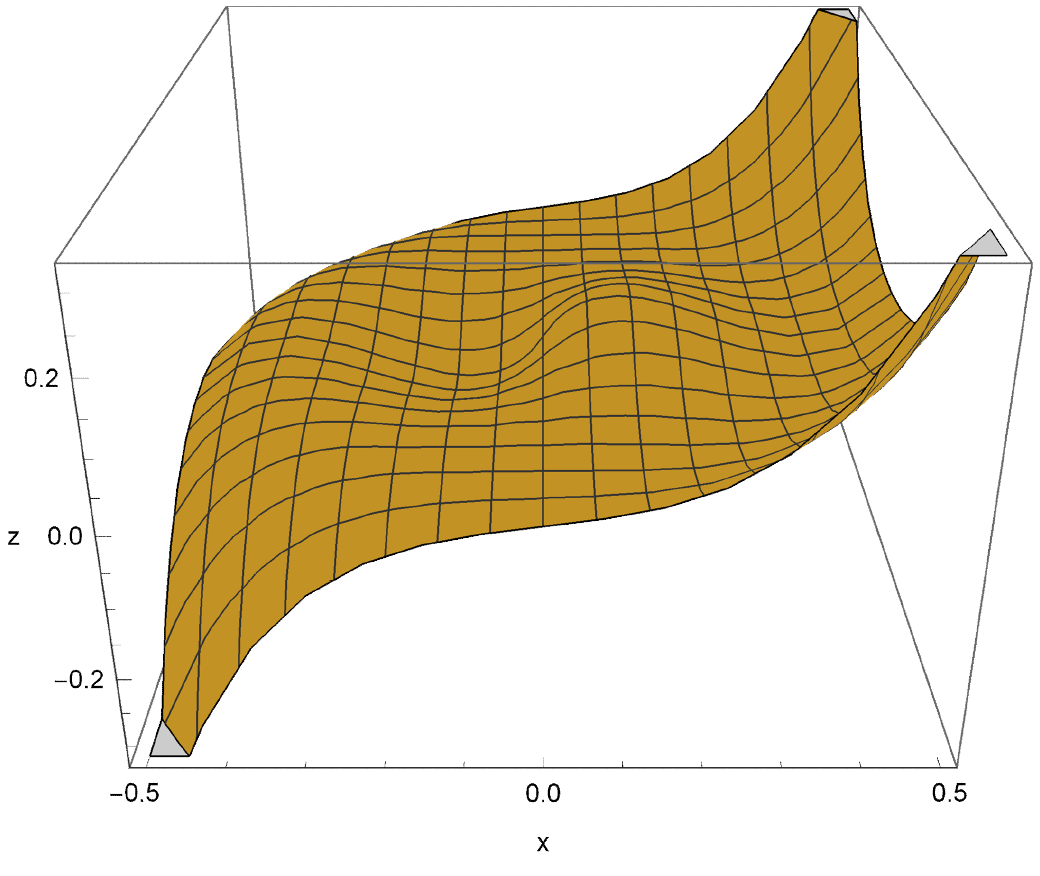}\hspace{.05\textwidth}\includegraphics[height=.15\textheight]{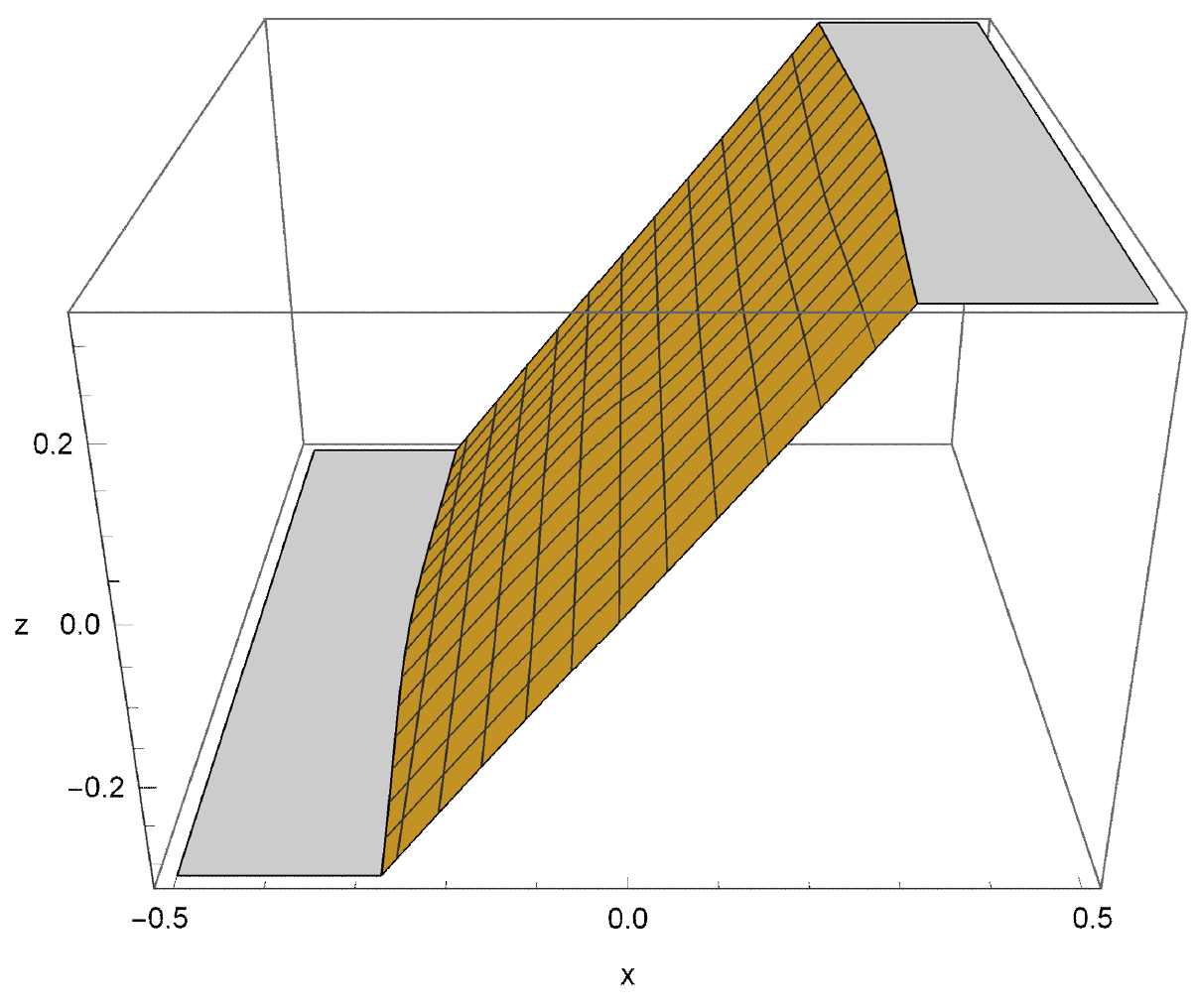}\hspace{.05\textwidth}
\includegraphics[height=.15\textheight]{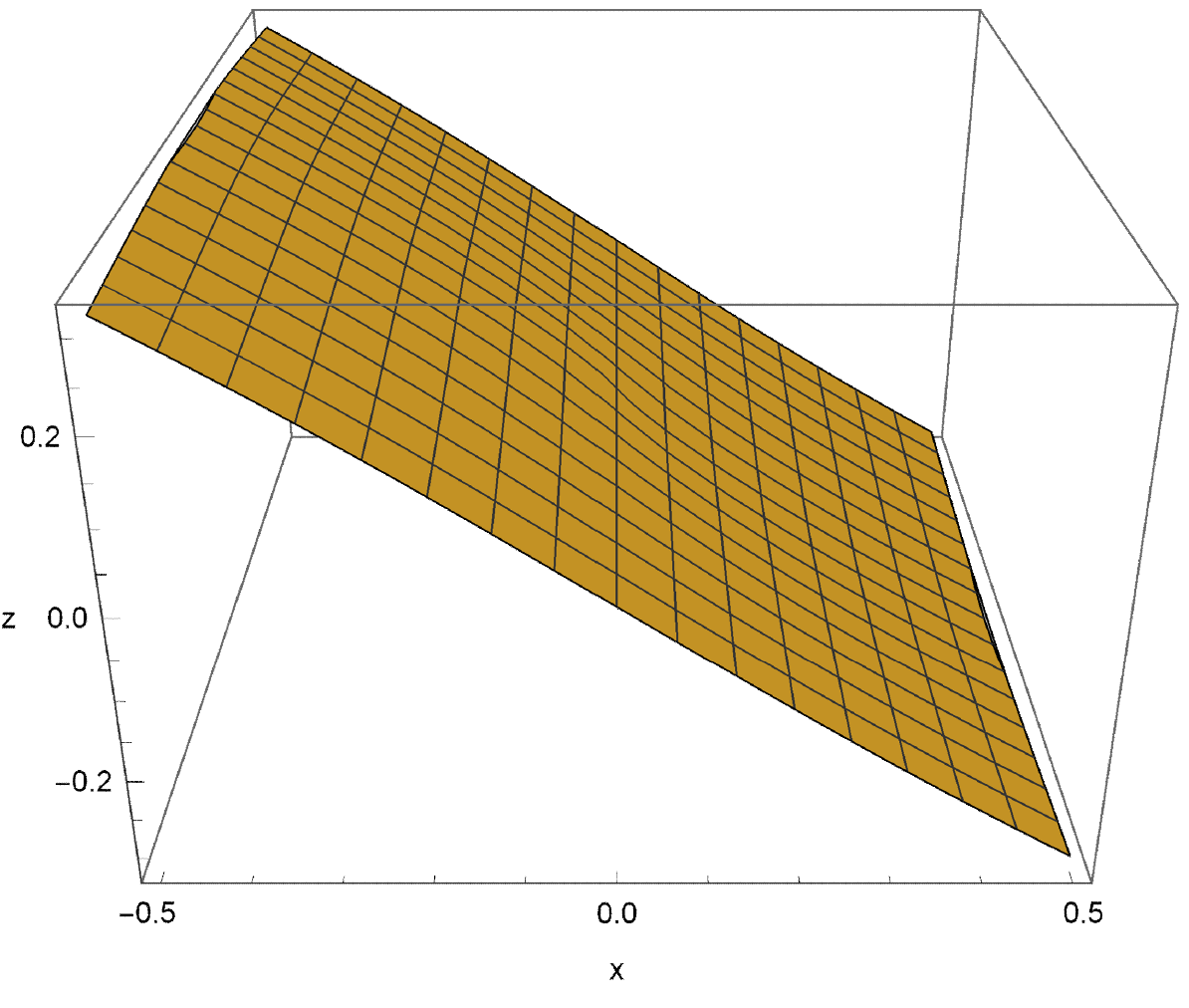}
\end{center}\caption{Blow-ups $\Sigma/r$ of the interface $\Sigma=\partial\Omega^\pm$ of the graph domains. Left: $r=1$. Center: $r=10^{-6}$. Right: $r=10^{-12}$.}\label{fig:graphs}\end{figure}

\begin{remark}\label{rem:actuallyvanishing}
Moreover, $\Omega^{\pm}$ are vanishing chord-arc domains in the sense of \cite{kenigtoroannsci}. This can be seen as follows. First, every pseudo blow-up (an Attouch-Wets limit $\Gamma$ of $(\Sigma-Q_i)/r_i$ with $Q_i\rightarrow Q$ and $r_i \downarrow 0$) is a plane. Indeed, on the one hand, if $\limsup_{i\rightarrow\infty}|Q_i-Q|/r_i =\infty$, then $\Gamma$ is a plane, because $\Sigma\setminus\{0\}$ is smooth. On the other hand, if $|Q_i|/r_i \leq C$ for all $i$, then $\Gamma$ is a translate of a blow-up at $Q$ (see \cite[Lemma 3.7]{localsetapproximation}), and thus, $\Gamma$ is a plane by Lemma \ref{l:propertiesofsurface}. Because every pseudo blow-up is a plane, $\Sigma$ is locally Reifenberg vanishing. Now, $v\in W^{2,2}(\RR^2)$ (see \cite{ToroJDG}). Hence, by Sobolev embedding, the normal vector of the interface $\hat{n} \in \mathrm{BMO}(\partial \Omega)$ with small BMO norm. Therefore, $\Omega^\pm$ are vanishing chord-arc domains; see e.g.~\cite{kenigtoroduke, BEGTZ}.
\end{remark}

\subsection{Potential Theory for the Second Example}

Following the approach of \S\ref{ss:potentialsecond}, we now prove that $\log h \in C(\partial \Omega)$.\footnote{One could prove the weaker result that $\log h \in \mathrm{VMO}(d\omega^+)$ using Remark \ref{rem:actuallyvanishing} and standard properties of $A_\infty$ weights.} As before, because $\partial\Omega$ is smooth outside of any neighborhood of the origin, $\log h \in C^\infty$ on $\partial\Omega\setminus B_r(0)$ for any $r>0$. Thus, the key point is to show that $\log h$ is continuous at the origin.

Let $H^\pm=\{\pm z>0\}$ denote the open upper and lower half-spaces. Let $r_i \downarrow 0$ be arbitrary, $K \gg 1$ and write $$\{z= v(x,y)\}\cap (B_{Kr_i}\backslash B_{r_i/K}) = \Phi^{-1}(\{z = 0\} \cap (B_{Kr_i} \backslash B_{r_i/K})),$$ where $\Phi:\RR^3\rightarrow\RR^3$ is the homeomorphism given by \begin{equation}\label{e:Dphi2} \Phi(x,y,z) \equiv (x,y, z-v(x,y)).\end{equation} Set $\tilde{u}^\pm_i(p) =  \frac{u^{\pm}\circ \Phi^{-1}(r_ip)r_i}{\omega^{\pm}(B_{r_i}(0))}$, where $u^{\pm}$ are the Green's functions with poles at infinity for $\Omega^{\pm}$, and the $\omega^{\pm}$ are the corresponding harmonic measures. In $H^\pm\cap B_K\backslash B_{1/K}$, $\tilde{u}^{\pm}_i$ satisfies $$-\mathrm{div}(B(r_ip)\nabla \tilde{u}^{\pm}_i(p)) = 0,\qquad B = (\mathrm{det}D\Phi)^{-1}(D\Phi)(D\Phi)^T.$$





\begin{lemma}\label{l:c1alphaconv2}
Let $\alpha \in (0,1), K > 1$. The sequence $\tilde{u}^{\pm}_i$ is pre-compact in $C^{1,\alpha}(\overline{H^\pm}\cap B_K\backslash B_{1/K})$. Furthermore, there exists a subsequence along which $\tilde{u}^{\pm}_i \rightarrow \kappa z_{\pm}$ for some $\kappa>0$ uniformly on compact subsets of $\mathbb R^3$.
\end{lemma}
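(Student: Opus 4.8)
The plan is to follow the proof of Lemma \ref{l:c1alphaconv} step by step, the only essential new feature being the behaviour of the coefficient matrix $B(r_i\cdot)$. Since $\Phi$ in \eqref{e:Dphi2} is a vertical shear, $\det D\Phi\equiv 1$ and $B=D\Phi\,(D\Phi)^T$ depends only on $\nabla v$; explicitly its entries are built from $v_x,v_y$ evaluated at $(x,y)$. As in Lemma \ref{l:c1alphaconv}, the two routine inputs are: (a) a uniform $L^\infty$ bound $\sup_{B_{4K}}|\tilde u^\pm_i|\le C$, which follows verbatim from the Caffarelli--Fabes--Mortola--Salsa and doubling estimates together with the normalization by $\omega^\pm(B_{r_i})/r_i$; and (b) uniform ellipticity and uniform Lipschitz regularity of the coefficients, which feed Schauder theory \cite[Theorem 8.3]{GT-book} to give uniform $C^{1,\alpha}(\overline{H^\pm}\cap B_K\setminus B_{1/K})$ bounds and hence precompactness. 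Granting (a)--(b), a subsequential $C^{1,\alpha}$ limit $\tilde u^\pm_\infty$ solves the constant-coefficient equation $-\mathrm{div}(B_\infty\nabla\tilde u^\pm_\infty)=0$ in $H^\pm$, is positive, and vanishes on $\{z=0\}$.

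The crucial difference from Lemma \ref{l:c1alphaconv} is that here $B_\infty$ is \emph{not} the identity. In the first example the twist was by the rotations $R_\theta$, which are orthogonal, so $D\Phi_\theta\to R_{\theta_0}$ forced $B_\infty=R_{\theta_0}R_{\theta_0}^T=\mathrm{Id}$; a shear is not orthogonal, so the flattened metric survives in the limit. Concretely, one computes $\nabla v(x,y)=(f(r),0)+O\big(\log|\log r|/|\log r|\big)$ with $f(r)=\log|\log r|\sin(\log|\log r|)$, and $\|D^2v\|_\infty\lesssim \log|\log r|/(r|\log r|)$ on the shell $r\sim r_i$. The second bound yields the analogue of \eqref{eq:bbound}, namely that the Lipschitz constant of $B(r_i\cdot)$ on $B_K\setminus B_{1/K}$ is $\lesssim (\log|\log r_i|)^2/|\log r_i|\to0$; so along any subsequence on which the leading slope $f(r_i)$ converges to a finite limit $m_0$, we get $B(r_i\cdot)\to B_\infty$ uniformly, where $B_\infty$ is the constant matrix of the shear with $\nabla v\equiv(m_0,0)$. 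Passing to such a subsequence is exactly where we must be careful, and is the main obstacle below.

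The main obstacle is that, unlike the rotation angle $\theta$, the slope $f(r)=\log|\log r|\sin(\log|\log r|)$ is \emph{unbounded} as $r\to0$ (this is precisely what produces the vertical blow-up planes $m=\pm\infty$ in Lemma \ref{l:propertiesofsurface}). Hence $B(r_i\cdot)$ is uniformly elliptic only along subsequences on which $f(r_i)$ stays bounded, and the precompactness and convergence must be read along such a subsequence; since $f$ oscillates through all of $\RR$, we may always extract a subsequence with $f(r_i)\to m_0$ finite, which furnishes the ``there exists a subsequence'' in the statement. (For the eventual continuity of $\log h$ this suffices, because it is enough to show that every sequence $Q_i\to0$ has a subsequence along which the relevant gradient ratio tends to $1$.)

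It remains to identify the limit. Because $B_\infty$ is a constant symmetric positive-definite matrix, we may write $B_\infty=TT^T$ and set $q=T^{-1}p$; then $\tilde u^\pm_\infty\circ T$ is harmonic on the (tilted) half-space $T^{-1}H^\pm$ and vanishes on its bounding hyperplane. A positive harmonic function on a half-space vanishing on the boundary is a positive multiple of the affine height function, so pulling back, $\tilde u^\pm_\infty$ is linear; being positive on $H^\pm$ and zero on $\{z=0\}$, it must equal $\kappa_\pm(\pm z)$ with $\kappa_\pm>0$. (Equivalently, one may invoke boundary Harnack for $\mathrm{div}(B_\infty\nabla\cdot)$ on the half-space against the explicit solution $z$, exactly as in the boundary Harnack step of Lemma \ref{l:c1alphaconv}.) Finally, the symmetry $-\Omega^+=\Omega^-$ gives $\omega^+(B_r)=\omega^-(B_r)$ and $u^+(X)=u^-(-X)$, which forces $\kappa_+=\kappa_-=:\kappa$ and pins down the normalization, completing the proof.
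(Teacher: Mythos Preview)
Your overall shape mirrors the paper's proof: verify a Lipschitz bound on the pulled-back coefficients, invoke Schauder for uniform $C^{1,\alpha}$ estimates, pass to a limit, and identify the limit by a Liouville/boundary Harnack argument in the half-space. You also correctly note (and the paper does not dwell on this) that $B_\infty$ is only a constant positive matrix, not the identity; your linear change of variables $B_\infty=TT^T$ is equivalent to the paper's one-line check that $-\mathrm{div}(B_\infty\nabla z)=0$ because $B_\infty e_3$ is constant. The symmetry step giving $\kappa_+=\kappa_-$ is the same in both arguments.

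There is, however, a genuine gap in your handling of the ellipticity. You correctly observe that $v_x\sim f(r)=\log|\log r|\,\sin(\log|\log r|)$ is unbounded, so $B(r_i\cdot)$ is uniformly elliptic only when $f(r_i)$ stays bounded. But your claim that ``since $f$ oscillates through all of $\RR$, we may always extract a subsequence with $f(r_i)\to m_0$ finite'' is false. Choose $r_i\downarrow 0$ with $\sin(\log|\log r_i|)=1$ for every $i$; then $f(r_i)=\log|\log r_i|\to+\infty$ and \emph{no} subsequence has $f$ bounded. That $f$ hits every real value along \emph{some} sequence tending to $0$ does not say anything about subsequences of a \emph{given} sequence. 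Since the lemma is stated for an arbitrary $r_i\downarrow 0$, and since the subsequent continuity-of-$\log h$ argument needs it with $r_i=|Q_i|$ for arbitrary boundary points $Q_i\to 0$, these ``steep'' sequences cannot be thrown away. Your Schauder step therefore does not deliver precompactness in this regime. (The paper's proof, for its part, records only the Lipschitz estimate \eqref{e:estB2} and proceeds ``as in Lemma~\ref{l:c1alphaconv}'' without isolating the ellipticity; it does not supply a separate argument for the unbounded-slope case either.) A correct treatment of the degenerate sequences would require a different normalization of the flattening---for instance, precomposing $\Phi$ with a rotation carrying the approximate tangent plane at scale $r_i$ to $\{z=0\}$ before shearing---so that the resulting coefficient matrix stays uniformly elliptic regardless of $f(r_i)$.
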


\begin{proof} We claim that $\tilde{u}^\pm_i$ solves an elliptic PDE with Lipschitz continuous coefficients in $B_K\backslash B_{1/K}\cap H^{\pm}$.  Indeed, \begin{equation}\label{e:estB2}|B(r_ip) - B(r_i q)| \leq Cr_i |p-q| \|DB\|_{L^\infty(B_{Kr_i}\backslash B_{r_i/K})} \stackrel{\text{\cite{ToroJDG}}}{\leq} CKr_i \frac{\log|\log(r_i)|}{r_i|\log(r_i)|}|p-q| \leq CK|p-q|,\end{equation} by the fundamental theorem of calculus.

Arguing as in Lemma \ref{l:c1alphaconv} above, $\tilde{u}^\pm_i$ are uniformly in $C^{1,\alpha}(\overline{H^+}\cap B_K\backslash B_{1/K})$ for any $\alpha \in (0,1)$ and thus have the desired pre-compactness. Passing to a subsequence and invoking a diagonal argument $\tilde{u}^\pm_i \rightarrow \tilde{u}^\pm_\infty$ uniformly on compacta. Furthermore $\tilde{u}^{\pm}_\infty > 0$ and solves $-\mathrm{div}(B_\infty \nabla \tilde{u}^\pm_\infty) = 0$ in $H^\pm$ and has $\tilde{u}^\pm_\infty(x,y,0) = 0$. We see in \eqref{e:estB2} that $B_\infty$ is constant (as $\log|\log(r_i)|/\log(r_i) \downarrow 0$) and so $-\mathrm{div}(B_\infty \nabla z) = 0$. Again, up to scalar multiplication there is a unique signed solution of $-\mathrm{div}(B_\infty \nabla -) = 0$ in $H^{\pm}$ which vanishes on $\{z =0\}$ and that has subexponential growth at infinity. Continuing to follow the argument for Lemma \ref{l:c1alphaconv}, we conclude that $\tilde{u}_\infty^\pm = \kappa_\pm z_\pm$, with $\kappa_+ = \kappa_-$. (Remember that $-\{z > v(x,y)\} = \{z < v(x,y)\}$, because $v$ is odd.)
\end{proof}

Finally, the proof of the continuity of $\log h$ in this context follows exactly as in \S\ref{ss:potentialsecond} except that we must be more careful estimating $|D\Phi(r_i\tilde{Q}_i)\nabla \tilde{u}^\pm(\tilde{Q}_i)|$. (We do not know that $D\Phi(r_ip)$ converges to a rotation as $r_i \downarrow 0$.) However, observe that $\tilde{u}^{\pm} \equiv 0$ on $\{z =0\}$, so we know that $\nabla \tilde{u}^\pm(\tilde{Q}_i)$ is parallel to $e_3$. Thus, an elementary computation shows that $$\frac{|D\Phi(r_i\tilde{Q}_i)\nabla \tilde{u}^+(\tilde{Q}_i)|}{|D\Phi(r_i\tilde{Q}_i)\nabla \tilde{u}^-(\tilde{Q}_i)|} = \frac{|\nabla \tilde{u}^+(\tilde{Q}_i)| |D\Phi(r_i\tilde{Q}_i)e_3|}{|\nabla \tilde{u}^-(\tilde{Q}_i)||D\Phi(r_i\tilde{Q}_i)e_3|} = \frac{|\nabla \tilde{u}^+(\tilde{Q}_i)|}{|\nabla \tilde{u}^-(\tilde{Q}_i)|}.$$ The quantity on the right hand side converges to 1 by Lemma \ref{l:c1alphaconv2}. As in \S\ref{ss:potentialsecond}, it follows that $\log h\in C(\partial\Omega)$.

\section{Open Questions and Further Directions}\label{s:moreQs}

We end by presenting some natural open questions. Our first question concerns the size of the set of non-uniqueness:

\begin{question}\label{q:size}
Let $\Omega^{\pm}\subset\RR^n$ be complementary NTA domains with $\log h \in C(\partial \Omega)$. Is it possible for $$NU(\Omega) :=\{Q\in \partial \Omega : \text{there is no unique (geometric) blow-up at } Q\}$$ to have Hausdorff dimension $n-1$?
\end{question}

We note that a local version of \cite[Theorem 1.1]{Tolsa2022} implies that the set $\Gamma_1$ of flat points in $\partial\Omega$ is uniformly rectifiable. Thus $\omega^{\pm}(NU) = 0 = \mathcal H^{n-1}(NU\cap \Gamma_1)$. Further, by the main result of \cite{BETHarmonicpoly}, $\dim \partial \Omega \backslash \Gamma_1 \leq n-3$. Thus, $\mathcal H^{n-1}(NU) = 0$. On the other hand, the example of \cite{spiral} suggests that $\mathcal H^{n-2}(NU\cap \Gamma_1) > 0$ may be possible.

The example in \S\ref{s:Domain2} (twisted Szulkin) shows that it is possible for all singular points to have non-unique blowups and for the set of singular points with non-unique blowups to have positive $\mathcal H^{n-3}$-measure. (When $n\geq 4$, simply take $\Omega^\pm\times \RR^{n-3}$.) This is sharp by \cite{BETHarmonicpoly}. Thus, the natural analogue of Question \ref{q:size} is answered in the affirmative.

Our second question asks what are the possible tangent cones at points of non-unique blow-up:

\begin{question}\label{q:moduli1}
Let $C \subset G(n, n-1)$ be a compact, connected subset of the Grassmannian. Does there exist a pair of complementary NTA domains $\Omega^{\pm}$ with $\log h \in C(\partial \Omega)$ and a point $Q\in\partial\Omega$ at which $\Tan(\partial\Omega,Q)=C$?
\end{question}

In \S\ref{s:Domain1}, we showed that the set $\Tan(\partial\Omega,0)$ of blow-ups of the interface of the graph domains at the origin consists of all planes $z= mx$ with ``slope'' $-\infty\leq m \leq +\infty$. For any closed interval $I \subset \mathbb R$, it is not hard to adapt the example so that the blowups at the origin are exactly the planes $z = mx$ with $m \in I$. It is known that for any closed set $\Sigma\subset\RR^n$ and $Q\in\Sigma$, the set $\Tan(\Sigma,Q)$ of all tangent sets of $\Sigma$ at $Q$ is closed and connected in the Attouch-Wets topology \cite{localsetapproximation}; the statement and proof of this fact was originally motivated by similar statement for tangent measures \cite{preiss, kenigpreisstoro}.

We may also ask a version of Question \ref{q:moduli1} at points where the blow-ups are homogeneous of higher degree:

\begin{question}\label{q:moduli2}
Let $\mathscr H_{n,d}$ be the set of degree $d$ homogeneous harmonic polynomials $p$ in $\RR^n$ such that $\Omega^\pm_p=\{\pm p>0\}$ are NTA domains. For each $n\geq 3$ and $d\geq 2$ and $C \subset \mathscr H_{n,d}$, which is compact and connected, does there exist complementary NTA domains $\Omega^{\pm}$ with $\log h\in C(\partial \Omega)$ and a point $Q\in \partial \Omega$ at which $\Tan(\partial\Omega,Q)=\{\Sigma_p:p\in C\}$?
\end{question}

 The condition that $\mathbb R^n\backslash \Sigma_p$ is a union of two NTA domains is necessary for $\Sigma_p$ to arise as a blow-up of the interface of complementary NTA domains. The first step to answering Question \ref{q:moduli2} may be to study the ``moduli space'' of $\mathscr H_{n,d}$ when $d \geq 2$. For example:

\begin{question}\label{q:moduli3}
If $p$ and $q$ lie in the same connected component of $\mathscr{H}_{n,d}$, is it true that $\Sigma_q$ is bi-Lipschitz equivalent to $\Sigma_p$?
\end{question}

\bibliography{nonunique}{}
\bibliographystyle{amsbeta}
\end{document}